\documentclass{article}
\usepackage[utf8]{inputenc}

\usepackage{amsmath, amssymb, amsthm}
\usepackage{mathtools,lineno}
\usepackage[dvipsnames]{xcolor}
\usepackage{graphicx}
\usepackage{tikz}
\usetikzlibrary{calc}

\newtheorem{theorem}{Theorem}
\newtheorem{proposition}{Proposition}
\newtheorem{lemma}{Lemma}

\theoremstyle{definition}

\newtheorem{conjecture}{Conjecture}

\newcommand{\rb}{\text{rb}}
\renewcommand{\mod}[1]{\;(\text{mod }#1)}


\title{Bounds for Rainbow-uncommon Graphs}
\author{Blake Bates  \and  Zhanar Berikkyzy \and  Nick Chiem \and Gabriel Elvin \and Risa Fines \and Maja Lie \and Hanna Mikulás \and Isaac Reiter \and Kevin Zhou}
\date{March 6, 2024}

\begin{document}

\maketitle

\begin{abstract}
We say a graph $H$ is $r$-rainbow-uncommon if the maximum number of 
rainbow copies of $H$ under an $r$-coloring of $E(K_n)$ is asymptotically 
(as $n \to \infty$)
greater than what is expected from uniformly random $r$-colorings.
Via explicit constructions, we show that for $H\in\{K_3,K_4, K_5\}$, $H$ is $r$-rainbow-uncommon for all $r\geq {|V(H)|\choose 2}$.
We also construct colorings to show that for $t \geq 6$, $K_t$ is $r$-rainbow-uncommon for sufficiently large $r$. 
\end{abstract}

\section{Background}

A classic starting point in Ramsey theory is the following question:
if any two people can identify as either strangers or acquaintances,
what is the minimum number of people needed to guarantee that there
exists a set of three people who are all either complete strangers or
mutual acquaintances? Reframed in the language of graph theory,
we can let vertices denote people and draw a blue edge between 
two people if they are strangers and a red edge if they are 
acquaintances. Then the question is equivalent to: what is the smallest
$n$ such that every coloring of the edges of a complete graph $K_n$ contains a 
monochromatic triangle, i.e. a copy of $K_3$ whose edges are either all
blue or all red? The answer to this question is $n = 6$, and in 
1959, Goodman asked and answered a more general follow-up question:
for an arbitrary $n$, 
what is the minimum number of monochromatic triangles over all  
2-colorings $E(K_n)$ \cite{Goodman}. He found an explicit formula, but a consequence of his result is that the minimum \textit{proportion}
of monochromatic triangles, that is
\[\frac{\text{number of monochromatic $K_3$}}{\text{total number of $K_3$}},\]
approaches $1/4$ as $n \to \infty$.
This is perhaps surprising, since $1/4$ is the expected proportion
in a uniformly random coloring, i.e. the minimum can be achieved,
at least asymptotically, by coloring completely randomly. 
This led to a new vein of Ramsey theory, classifying graphs as either
\textit{common} or \textit{uncommon}: common if the asymptotic
minimum can be achieved by uniformly random colorings, and uncommon
if better colorings can be found. See \cite[Section 2.6]{Conlon} for further reading in this area.

This question can be ``flipped'': given a fixed graph $H$ and $r$ colors,
what is the best way to color the edges of $K_n$ ($n$ large) in 
order to \textit{maximize} the number of \textit{rainbow} copies of
$H$ in $K_n$? By \textit{rainbow}, we mean that each edge of $H$ is 
assigned a different color from all the others. 
Some progress has been made in this area, 
and our work builds on the work in \cite{DeSilva}. 
Next, we paraphrase those authors' definitions, slightly
altered to suit our needs.

In this paper all graphs are simple. If $H$ and $G$ are graphs, 
a \textbf{copy} of $H$ in $G$ is a subgraph of $G$ that is isomorphic to $H$.
We define an \textbf{$r$-coloring} of a set $X$ to be a surjective
function $c : X \to \{1,\dots,r\}$.
For any graph $G$, let $E(G)$ denote the set of edges of $G$. 
Given a graph $G$ and subgraph $H$, we say $H$ is \textbf{rainbow}
if, under a coloring $c : E(G) \to \{1,\dots,r\}$, 
$c(uv) \neq c(u'v')$ for all distinct
$uv, u'v' \in E(H)$.

Results in this area are asymptotic in nature, so we clarify 
some notation used throughout. Let $f, g$ be functions of $n$.
If there exist constants $C, N$ such that $|f(n)| \leq Cg(n)$
for all $n \geq N$, we write $f = O(g)$.

For the remainder of this section, let $H$ be a fixed graph on
$t$ vertices with $e$ edges. We denote by $M_\rb(H; n, r)$ the maximum number of rainbow
copies of $H$ over all $r$-colorings of $E(K_n)$.
Furthermore, we define
\begin{align}
m_\rb(H; n, r) &\coloneqq \frac{M_\rb(H;n, r)}
{\text{total number of copies of $H$ in $K_n$}}, \\
m_\rb(H; r) &\coloneqq \lim_{n\to\infty} m_\rb(H; n, r). \label{def-rb-limit}
\end{align}
For a given graph $H$, $m_\rb(H; r)$ is known as the 
\textbf{$r$-rainbow Ramsey multiplicity constant}.
Note that it always exists due to the fact
that the sequence $m_\rb(H; n, r)$ is bounded and monotone in $n$, 
as shown in \cite{DeSilva}.
The overarching classification question is: for which graphs $H$
and number of colors $r$ can $m_{\rb}(H; r)$ be achieved by 
uniformly random colorings?
Note that, for any fixed copy of $H$ in a uniformly random $r$-coloring
of $E(K_n)$, there are $r\choose{e}$ distinct colors to choose that could
make the copy of $H$ rainbow, and once those colors are chosen there are
$e!$ ways the subgraph could be colored. Since there are $r^e$ possible colorings
of this subgraph, the expected proportion of rainbow $H$ in $G$ is ${r\choose{e}}e!/r^e$.
This motivates the following definition:
a graph $H$ is \textbf{$r$-rainbow-common} if
\begin{equation}\label{eq-common}
m_{\rb}(H; r) = \frac{{r\choose{e}}e!}{r^e}.
\end{equation}
Otherwise, $H$ is called \textbf{$r$-rainbow-uncommon}.

In this paper, we investigate rainbow-uncommonness of complete graphs,
building upon the following theorem.
\begin{theorem}[\cite{Erdos-Hajnal, DeSilva}]\label{thm-old}
$K_t$ is ${t\choose{2}}$-rainbow-uncommon for all $t \geq 3$.
\end{theorem}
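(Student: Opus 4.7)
Write $e=\binom{t}{2}$, so that a uniformly random $e$-coloring of $E(K_n)$ produces an expected rainbow-$K_t$ density of $e!/e^e$; the task is to exhibit an explicit $e$-coloring whose rainbow-$K_t$ density strictly beats this as $n\to\infty$.

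The general template I would use is a blow-up. Fix an auxiliary graph $G$ on $k$ vertices together with a surjective edge coloring $\phi:E(G)\to\{1,\dots,e\}$, then partition $V(K_n)$ into parts $V_1,\dots,V_k$ of nearly equal size $n/k$, color every edge between $V_i$ and $V_j$ with $\phi(ij)$, and assign the remaining interior edges any colors that preserve surjectivity. A transversal $t$-subset --- one vertex from each of $t$ distinct parts $V_{i_1},\dots,V_{i_t}$ --- spans a rainbow $K_t$ in $K_n$ precisely when the indices $\{i_1,\dots,i_t\}$ span a rainbow $K_t$ in $(G,\phi)$. Letting $N$ be the number of $t$-subsets of $V(G)$ spanning a rainbow $K_t$, the construction yields rainbow-$K_t$ density at least
\[
\frac{N\,(n/k)^t}{\binom{n}{t}} \;\longrightarrow\; \frac{N\cdot t!}{k^t} \qquad\text{as } n\to\infty.
\]

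For $t\geq 4$ I would take $G=K_t$ with $\phi$ a bijection onto the $e$ colors, so $N=1$ and the density tends to $t!/t^t$. The remaining verification is the numerical inequality $t!/t^t>e!/e^e$ for every $t\geq 4$; at $t=4$ this reduces to $3/32>5/324$, and for larger $t$ it follows from Stirling since $-\log(t!/t^t)\sim t$ while $-\log(e!/e^e)\sim e=\binom{t}{2}$, which grows quadratically in $t$.

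For $t=3$ the naive choice $G=K_3$ only reproduces the random density $2/9$, so I would instead take $G=K_4$ equipped with a $1$-factorization, i.e.\ a proper $3$-edge-coloring whose color classes are the three perfect matchings of $K_4$. Each of the four triangles of $K_4$ then picks up exactly one edge from each matching and is rainbow, giving $N=4$, $k=4$, and density $4\cdot 6/4^3 = 3/8 > 2/9$. The point I expect to require the most care is the uniform Stirling comparison for $t\geq 4$: although the spread between $t$ and $\binom{t}{2}$ makes the inequality far from tight, writing a clean bound that covers all $t$ simultaneously (rather than splitting into small cases plus an asymptotic tail) is where I would spend the most attention.
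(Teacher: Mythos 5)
Your construction is correct, but it takes a genuinely different (and more elementary) route than the method this paper is built around. Theorem \ref{thm-old} is quoted from \cite{Erdos-Hajnal,DeSilva}, and the technique the paper develops for it is the \emph{iterated} blow-up: start from a coloring of some $K_b$ containing $a$ rainbow copies, recurse inside every part, and solve the recurrence of Lemma \ref{lem-main} to get rainbow density at least $a\,t!/(b^t-b)$, which is then compared with the random benchmark via (\ref{eq-y-criterion}). You instead use a single one-level blow-up and count only transversal copies, coloring the interior edges arbitrarily and ignoring them; this avoids the recurrence/generating-function machinery at the cost of a slightly weaker constant ($N\,t!/k^t$ rather than $N\,t!/(k^t-k)$), which is still ample. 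Concretely, for $t\geq 4$ your blow-up of a single rainbow $K_t$ gives density $t!/t^t$ versus the iterated value $t!/(t^t-t)$ from \cite{DeSilva}, both exceeding $e!/e^e$; and for $t=3$ your base graph, the $1$-factorized $K_4$ in which all four triangles are rainbow, gives $3/8$, which actually beats the $1/4$ that the iterated blow-up of a rainbow triangle yields in Proposition \ref{prop-K3} for $r=3$ --- it is the same ``every color class is a matching'' idea as the paper's parallel colorings, just on $K_4$ with $3$ colors instead of on $K_r$ with $r$ colors. One small repair to the step you flagged: the uniform inequality $t!/t^t > e!/e^e$ for all $t\geq 4$ with $e=\binom{t}{2}$ needs neither Stirling nor a case split, because $m\mapsto m!/m^m$ is strictly decreasing (indeed $\tfrac{(m+1)!}{(m+1)^{m+1}}=\tfrac{m!}{(m+1)^{m}}<\tfrac{m!}{m^{m}}$) and $\binom{t}{2}>t$ for $t\geq 4$; with that observation your argument is complete.
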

The case $t = 3$ was proven by Erd\H{o}s and Hajnal in 1972 \cite{Erdos-Hajnal},
and De Silva et al. showed it for all $t\geq 4$ in 2019 \cite{DeSilva}.
Our first result extends that of Erd\H{o}s and Hajnal and the first
two cases from De Silva et al.
\begin{theorem}\label{thm-exact}
$K_t$ is $r$-rainbow-uncommon for all $r \geq {t\choose{2}}$,
when $t = 3, 4, 5$.
\end{theorem}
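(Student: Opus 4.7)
The plan is to exhibit, for each $t\in\{3,4,5\}$ and each $r\geq e:=\binom{t}{2}$, an $r$-coloring of $E(K_n)$ (with $n$ large) whose rainbow $K_t$-proportion exceeds the random expectation $f(r):=\binom{r}{e}e!/r^{e}$ in the limit $n\to\infty$. The base case $r=e$ is precisely Theorem~\ref{thm-old}, so the work lies in the range $r>e$.

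The first approach I would pursue is an inductive color-splitting argument. Suppose $c:E(K_n)\to[r]$ witnesses that $K_t$ is $r$-rainbow-uncommon, with rainbow proportion $p_r>f(r)$. Pick a color $i^*$ and a subset $S$ of its edges, and recolor the edges of $S$ with a new label $r+1$ to obtain $c':E(K_n)\to[r+1]$. A direct case analysis gives two things. First, every $K_t$ rainbow under $c$ is still rainbow under $c'$, because if it used color $i^*$ at all, it used $i^*$ exactly once, and that edge is now either $i^*$ or $r+1$. Second, the only additional rainbow copies are those $K_t$'s whose $c$-color multiset is rainbow except for one duplicated pair in color $i^*$; call this set $T_{i^*}$. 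Such a copy becomes rainbow under $c'$ precisely when exactly one of its two $i^*$-colored edges lies in $S$. Including each color-$i^*$ edge in $S$ independently with probability $1/2$ gives expectation $|T_{i^*}|/2$ for the number of new rainbow copies, so some deterministic choice of $S$ achieves at least this.

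To close the inductive step, the inequality to verify is
\[
p_r + \tfrac{1}{2}\cdot\frac{|T_{i^*}|}{\binom{n}{t}} \;>\; f(r+1).
\]
This is where I expect the main obstacle to lie: since $f(r+1)-f(r)=\Theta(1/r^2)$ as $r\to\infty$ while the slack $p_r-f(r)$ from the hypothesis need not grow, a qualitative inductive hypothesis is not enough. One must extract, at each stage, an explicit lower bound on the density of near-rainbow copies (i.e.\ on $\max_{i^*}|T_{i^*}|/\binom{n}{t}$). I expect this to be tractable for $t\in\{3,4,5\}$ because the colorings behind Theorem~\ref{thm-old} are algebraic or symmetric enough that their two-edge color statistics can be computed in closed form, and these statistics propagate under the splitting step.

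If the splitting step fails to beat $f(r+1)$ at some particular $r$, I would drop the induction there and give a direct construction tailored to that $r$: take a well-chosen extremal $r$-coloring $c_0$ of a small $K_m$ (with $m$ depending on $r$), and blow up each vertex into a part of size $n/m$, coloring inter-blob edges by $c_0$ and intra-blob edges arbitrarily. Since a $1-o_m(1)$ fraction of copies of $K_t$ in the blow-up are blob-transversal and inherit their coloring from $c_0$, the asymptotic rainbow proportion of the blow-up approaches the rainbow proportion of $c_0$; thus it suffices to find, for each target $r$, a single finite $K_m$-coloring beating $f(r)$, which is a finite and in-principle verifiable task for each of $t=3,4,5$.
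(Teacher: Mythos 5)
Both of your routes stop exactly where the content of the theorem begins, so as written this is a strategy outline rather than a proof. In the splitting route, the two structural claims (rainbow copies survive a split; a random half-split repairs in expectation half of the copies in $T_{i^*}$) are fine, but the inductive step rests on the unverified inequality $p_r + |T_{i^*}|/(2\binom{n}{t}) > f(r+1)$, and you supply no computation of $|T_{i^*}|$ for any witness coloring. Worse, for the natural witnesses behind Theorem \ref{thm-old} (iterated blow-ups of a rainbow $K_t$), the route demonstrably stalls when $t\in\{4,5\}$: in such a coloring, any copy of $K_t$ whose vertices are not all inside one blob at every level of the recursion is either rainbow (fully transversal) or has at least two repeated color pairs or a color repeated three or more times (a $2{+}1{+}\cdots{+}1$ split repeats the two distinct inter-blob colors at the cloned vertex; $2{+}2$ and $3{+}1$ splits repeat a single color three or four times). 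Hence the density of copies with exactly one duplicated pair --- the only copies a single split can repair --- tends to $0$, so each split gains $o(1)$ in proportion while $f(r+1)-f(r)$ is a fixed positive constant. Concretely, for $t=4$ the blow-up of a rainbow $K_4$ has asymptotic rainbow proportion $2/21\approx 0.095$, which already fails against $f(9)\approx 0.114$, and splitting cannot close that gap; so the induction would have to be seeded with different, better colorings at various $r$, which you never construct. Even for $t=3$, where near-rainbow copies do have positive density, the required inequality is never checked for a single value of $r$.

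Your fallback is in spirit the paper's method, but it reduces the theorem to ``find, for each $r$, a finite coloring of some $K_m$ whose rainbow $K_t$-proportion beats $f(r)$ by a margin covering the non-transversal loss'' and then calls this ``in-principle verifiable'' without exhibiting any coloring or performing any verification --- that finite task \emph{is} the theorem. The paper does it explicitly: it uses the parallel $r$-coloring of $K_r$ (every color class a matching, as in Proposition \ref{prop-K3}), under which every triangle is rainbow and the number of non-rainbow $K_4$'s (hence $K_t$'s) is at most $r\binom{\lfloor r/2\rfloor}{2}\binom{r-4}{t-4}$; it feeds the resulting lower bound on rainbow copies into the blow-up criterion (\ref{eq-y-criterion}) coming from Lemma \ref{lem-main}; and it verifies the resulting polynomial inequality (immediately for $t=3$, numerically at $(t,r)=(4,6)$ and $(5,10)$, with a positive leading coefficient handling large $r$). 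Without an explicit coloring together with that counting and verification, your proposal leaves the essential quantitative step open.
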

We also show that for $t \geq 6$, $K_t$ is $r$-rainbow-uncommon for all sufficiently
large $r$, stated precisely below.
\begin{theorem}\label{thm-weak}
Let $t \geq 6$. There exists $r_t$ such that
$K_t$ is $r$-rainbow-uncommon for all $r \geq r_t$.
\end{theorem}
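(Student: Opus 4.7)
The plan is to construct, for each sufficiently large $r$, an $r$-coloring of $E(K_n)$ with rainbow $K_t$ density strictly above $\binom{r}{e}e!/r^e$, by randomly \emph{refining} the extremal $\binom{t}{2}$-rainbow-uncommon coloring from Theorem \ref{thm-old}. Here $e := \binom{t}{2}$.

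Let $c_0 : E(K_n) \to [e]$ be a coloring as in Theorem \ref{thm-old}, so that for some $\delta > 0$ its rainbow $K_t$ proportion exceeds $e!/e^e + \delta$ for all large $n$. Given $r \geq e$, partition $[r]$ into $e$ nearly-equal groups $G_1, \ldots, G_e$ of sizes $s_1, \ldots, s_e$. Construct a random $r$-coloring $c_r$ of $E(K_n)$ by independently assigning each edge $uv$ a uniformly random color in $G_{c_0(uv)}$. For a copy $H$ of $K_t$ in $K_n$ with $T_i(H)$ edges of $c_0$-color $i$, colors from distinct groups never coincide, so $H$ is rainbow under $c_r$ with probability
\[
\prod_{i=1}^{e} \frac{s_i(s_i-1)\cdots(s_i-T_i(H)+1)}{s_i^{T_i(H)}}.
\]
Summing over $H$ and applying the probabilistic method, it suffices to show that the expected rainbow proportion under $c_r$ strictly exceeds $\binom{r}{e}e!/r^e$ for all $r \geq r_t$.

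The main technical step is the asymptotic comparison of this expectation against $\binom{r}{e}e!/r^e$ as $r \to \infty$. Expanding both quantities in powers of $1/r$ with $s_i \approx r/e$, the leading $-\binom{e}{2}/r$ contributions match whenever the $c_0$-color class sizes are balanced, so any strict improvement must live in the $O(1/r^2)$ correction. I expect the excess $\delta > 0$ from Theorem \ref{thm-old} to drive this second-order term positive: rainbow-profile copies of $K_t$ under $c_0$ contribute the maximum value $1$ at every order in $1/r$, while the distribution of non-rainbow profiles is shifted relative to the profile distribution of a uniformly random $e$-coloring, yielding a positive net second-order contribution.

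The main obstacle is precisely this second-order comparison, since the gap $L(r) - \binom{r}{e}e!/r^e$ is only of size $O(1/r^2)$ and its sign depends on joint statistics of $c_0$-profiles (e.g.\ the frequency of $K_t$ copies with exactly two repeated colors) that Theorem \ref{thm-old} does not directly furnish. Should a single refinement prove insufficient, natural fallbacks include using unequal group sizes $s_i$ and optimizing over them, pre-boosting the base coloring to one on $r_0 > \binom{t}{2}$ colors with a larger rainbow-excess, or iterating the lifting; the resulting threshold $r_t$ will depend on $t$ and on $\delta$.
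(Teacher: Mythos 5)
Your reduction is set up correctly (the formula $\prod_i (s_i)_{T_i}/s_i^{T_i}$ for the rainbow probability of a copy with profile $(T_i)$ is right), but the crucial positivity claim is not established, and in fact the first-order analysis on which you hang your hopes goes the wrong way. Writing $s_i\approx r/e$, the expected rainbow proportion of the refinement is $1-\tfrac{e}{r}\,\mathbb{E}_{c_0}\bigl[\sum_i\binom{T_i}{2}\bigr]+O(1/r^2)$, while the baseline is $1-\tfrac{1}{r}\binom{e}{2}+O(1/r^2)$. The statistic $\mathbb{E}_{c_0}\bigl[\sum_i\binom{T_i}{2}\bigr]$ is the sum over pairs of edge-slots of $K_t$ of the probability that both land on edges of the same $c_0$-color, and by Cauchy--Schwarz (on color-class sizes for disjoint pairs, and on degrees within each color class for adjacent pairs) this is at least $\binom{e}{2}/e=\tfrac{e-1}{2}$ for \emph{every} $e$-coloring $c_0$, with equality only when the classes are balanced \emph{and} each class is near-regular. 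So balancing the group sizes does not make the leading terms match as you claim; the refinement can never beat the uniform baseline at order $1/r$, and it ties only under an extra regularity hypothesis. Worse, the known extremal colorings behind Theorem \ref{thm-old} (iterated blow-ups) violate this badly: for $t=3$, $e=3$, the Erd\H{o}s--Hajnal blow-up has asymptotic profile distribution $(\text{rainbow},\ \text{one repeat},\ \text{monochromatic})=(\tfrac14,\tfrac12,\tfrac14)$, so refining into $r=3s$ colors gives expected rainbow density $1-\tfrac{5}{4s}+O(1/s^2)$ versus the baseline $1-\tfrac{1}{s}+O(1/s^2)$ --- strictly worse; the analogous blow-ups for larger $t$ have the same defect (many copies of $K_t$ using very few colors). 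Your heuristic that a zeroth-order rainbow excess of $c_0$ should produce a positive second-order term is therefore backwards: rainbow-richness of the base coloring forces heavier repeats elsewhere, which the refinement penalizes already at first order. As written, the proof has a genuine gap at its only substantive step, and the proposed source coloring makes that step false.

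For contrast, the paper does not refine a coloring with $e=\binom{t}{2}$ colors on a growing $K_n$; it lets the base clique grow with $r$. It takes a proper edge coloring (``parallel $r$-coloring'') of $K_r$ with $r$ colors, in which every non-rainbow $K_4$ must repeat a color inside a matching, so the number of non-rainbow $K_t$'s in $K_r$ is at most $r\binom{\lfloor r/2\rfloor}{2}\binom{r-4}{t-4}$ --- a vanishing proportion of $\binom{r}{t}$. Feeding $a=\binom{r}{t}\bigl(1-O(1/\binom{r}{t})\cdot\text{(that bound)}\bigr)$, $b=r$ into the blow-up criterion (\ref{eq-y-criterion}) and comparing leading coefficients in $r$ yields the positive coefficient $t(t-1)(t-3)/2$, hence uncommonness for all large $r$. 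If you want to salvage a refinement-style argument, you would need a base coloring that is simultaneously rainbow-rich, balanced, and near-regular in every color class, plus a proof that the second-order term has the right sign --- none of which Theorem \ref{thm-old} supplies as a black box.
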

In the next section, we prove Theorem \ref{thm-weak}.
Theorem \ref{thm-exact} is an immediate consequence of the work done 
in the proof of Theorem \ref{thm-weak},
which we show at the end of the section.

\section{Main Result}

In general, to show $r$-rainbow-uncommonness of a graph $H$, we can find an 
$r$-coloring of $E(K_n)$ with asymptotically strictly more rainbow
copies of $H$ than the number we would expect from a uniformly 
random coloring:
\begin{equation}\label{eq-random}
\frac{{n\choose{t}}t!}{|\text{Aut($H$)}|}\cdot\frac{{r\choose{e}}e!}{r^e}
= \frac{{r\choose{e}}e!}{|\text{Aut}(H)|r^e}n^t + O(n^{t-1})
\end{equation}
(the number of $H$ in $K_n$ times the probability that any given
$H$ is rainbow).

In our proofs, we use this fact and find explicit colorings
building on the iterated blow-up method in De Silva et al. \cite{DeSilva}.
The main idea is to find a coloring of $K_b$ which contains many
rainbow copies of $H$. Then we ``blow-up'' the $K_b$ by expanding
each vertex into a copy of $K_b$, and between each copy make all edges
the same color as the original edge between the original  two vertices.
We repeat this $k-1$ times to get a coloring of $K_n$, where $n = b^k$.
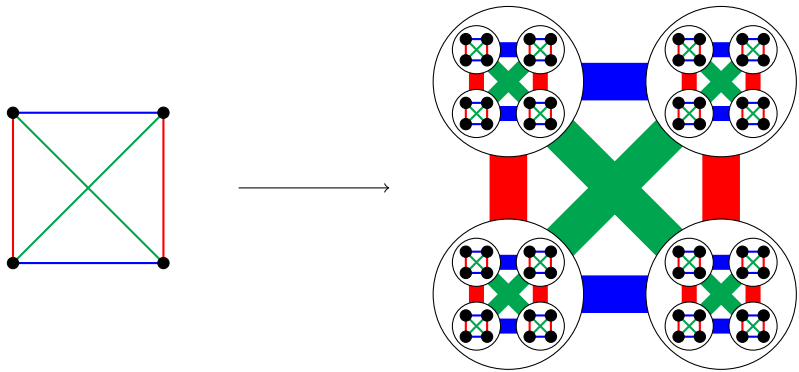
\begin{figure}
    \centering
\begin{tikzpicture}

\draw[thick, red] (-8,1) -- (-8,-1);
\draw[thick, red] (-6,1) -- (-6,-1);
\draw[thick, blue] (-8,1) -- (-6,1);
\draw[thick, blue] (-8,-1) -- (-6,-1);
\draw[thick, Green] (-8,1) -- (-6,-1);
\draw[thick, Green] (-8,-1) -- (-6,1);
\draw[fill=black] (-8,1) circle (0.075);
\draw[fill=black] (-8,-1) circle (0.075);
\draw[fill=black] (-6,1) circle (0.075);
\draw[fill=black] (-6,-1) circle (0.075);

\draw[->] (-5,0) -- (-3,0);

\draw[thick, red] (45: 2) -- (-45: 2);
\draw[thick, red] (135: 2) -- (-135: 2);
\draw[thick, blue] (45: 2) -- (135: 2);
\draw[thick, blue] (-45: 2) -- (-135: 2);
\draw[thick, Green] (135: 2) -- (-45: 2);
\draw[thick, Green] (45: 2) -- (-135: 2);
 
\draw[line width=5mm, red] (45: 2) -- (-45: 2);
\draw[line width=5mm, red] (135: 2) -- (-135: 2);
\draw[line width=5mm, blue] (45: 2) -- (135: 2);
\draw[line width=5mm, blue] (-45: 2) -- (-135: 2);
\draw[line width=5mm, Green] (135: 2) -- (-45: 2);
\draw[line width=5mm, Green] (45: 2) -- (-135: 2);
 
\foreach \t in {0,...,3}{
        \draw[fill=white] (90*\t + 45: 2) circle (1);
    
      
        \draw[thick, red] ($(90*\t + 45: 2) + (45: 0.6)$) -- ($(90*\t + 45: 2) + (-45: 0.6)$);
        \draw[thick, red] ($(90*\t + 45: 2) + (135: 0.6)$) -- ($(90*\t + 45: 2) + (-135: 0.6)$);
        \draw[thick, blue] ($(90*\t + 45: 2) + (45: 0.6)$) -- ($(90*\t + 45: 2) + (135: 0.6)$);
        \draw[thick, blue] ($(90*\t + 45: 2) + (-45: 0.6)$) -- ($(90*\t + 45: 2) + (-135: 0.6)$);
        \draw[thick, Green] ($(90*\t + 45: 2) + (135: 0.6)$) -- ($(90*\t + 45: 2) + (-45: 0.6)$);
        \draw[thick, Green] ($(90*\t + 45: 2) + (45: 0.6)$) -- ($(90*\t + 45: 2) + (-135: 0.6)$);
        
        \draw[line width=2mm, red] ($(90*\t + 45: 2) + (45: 0.6)$) -- ($(90*\t + 45: 2) + (-45: 0.6)$);
        \draw[line width=2mm, red] ($(90*\t + 45: 2) + (135: 0.6)$) -- ($(90*\t + 45: 2) + (-135: 0.6)$);
        \draw[line width=2mm, blue] ($(90*\t + 45: 2) + (45: 0.6)$) -- ($(90*\t + 45: 2) + (135: 0.6)$);
        \draw[line width=2mm, blue] ($(90*\t + 45: 2) + (-45: 0.6)$) -- ($(90*\t + 45: 2) + (-135: 0.6)$);
        \draw[line width=2mm, Green] ($(90*\t + 45: 2) + (135: 0.6)$) -- ($(90*\t + 45: 2) + (-45: 0.6)$);
        \draw[line width=2mm, Green] ($(90*\t + 45: 2) + (45: 0.6)$) -- ($(90*\t + 45: 2) + (-135: 0.6)$);
        
        \foreach \s in {0,...,3}{
               
               \draw[fill=white] (90*\t + 45: 2) + (90*\s + 45: 0.6) circle (0.32);


               \draw[thick, red] ($(90*\t + 45: 2) + (90*\s + 45: 0.6) + (45: 0.2)$) -- ($(90*\t + 45: 2) + (90*\s + 45: 0.6) + (-45: 0.2)$);
               \draw[thick, red] ($(90*\t + 45: 2) + (90*\s + 45: 0.6) + (135: 0.2)$) -- ($(90*\t + 45: 2) + (90*\s + 45: 0.6) + (-135: 0.2)$);
               \draw[thick, blue] ($(90*\t + 45: 2) + (90*\s + 45: 0.6) + (45: 0.2)$) -- ($(90*\t + 45: 2) + (90*\s + 45: 0.6) + (135: 0.2)$);
               \draw[thick, blue] ($(90*\t + 45: 2) + (90*\s + 45: 0.6) + (-45: 0.2)$) -- ($(90*\t + 45: 2) + (90*\s + 45: 0.6) + (-135: 0.2)$);
               \draw[thick, Green] ($(90*\t + 45: 2) + (90*\s + 45: 0.6) + (45: 0.2)$) -- ($(90*\t + 45: 2) + (90*\s + 45: 0.6) + (-135: 0.2)$);
               \draw[thick, Green] ($(90*\t + 45: 2) + (90*\s + 45: 0.6) + (-45: 0.2)$) -- ($(90*\t + 45: 2) + (90*\s + 45: 0.6) + (135: 0.2)$);
               
               \foreach \q in {0,...,3}{
                       
                       \draw[fill=black] (90*\t + 45: 2) ++ (90*\s + 45: 0.6) + (90*\q + 45: 0.2) circle (0.075);
                       
               }
        }
}
\end{tikzpicture}
    \caption{Iterated blowup of a $K_4$. The ``thick'' edges indicate that
    all possible edges between the sets of vertices are given the specified color.}
    \label{fig:iterated-blowup}
\end{figure}
An example of an iterated blowup of a $K_4$ is given in Figure \ref{fig:iterated-blowup}.
Note that this generates a coloring for only a subsequence of $K_n$s. However, since 
$m_\rb(H; n, r)$ is bounded and monotone in $n$ \cite{DeSilva}, the limit $m_\rb(H; r)$ must exist
and every subsequence must converge to that limit. 

The inequalities in the following lemma were used in \cite{DeSilva}, and we include their proofs here for completeness.

\begin{lemma}\label{lem-main}
Let $t \leq b$, let $a$ denote the number of rainbow
copies of $H$ in an $r$-coloring of $E(K_b)$, and let $F(n)$ denote the number of rainbow
copies of $H$ in the $K_n$ generated from an iterated blow-up of $K_b$. Then 
\begin{equation}\label{eq-blowup}
F(n) \geq bF(n/b) + a(n/b)^t.
\end{equation}
Furthermore, the solution to this recurrence is  
\begin{equation}\label{eq-solved-recurrence}
F(n)\geq \frac{a}{b^t -b}n^{t}+O(n^{t-1}).
\end{equation}
\end{lemma}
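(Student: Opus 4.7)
The plan is to establish the recurrence \eqref{eq-blowup} by a direct counting argument based on how the $t$ vertices of a rainbow $H$-copy distribute among the $b$ outer blocks of the blowup, and then to solve the resulting linear recurrence by standard iteration. The key structural fact to exploit is that, in one step of the blowup, each of the $b$ outer blocks is itself an iterated blowup of $K_b$ on $n/b$ vertices, while every edge between a fixed pair of outer blocks carries the single color of the corresponding edge in the original $K_b$.

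For \eqref{eq-blowup}, I would exhibit two disjoint families of rainbow copies. The first consists of copies whose $t$ vertices all lie inside a single outer block: by construction each block contributes $F(n/b)$ rainbow copies, giving $bF(n/b)$ in total. The second consists of copies with exactly one vertex in each of $t$ distinct blocks; since inter-block edges inherit their colors from the original $K_b$, the induced coloring on such a $t$-set matches the color pattern on the corresponding $t$ vertices in $K_b$. Thus choosing any of the $a$ rainbow copies of $H$ in $K_b$ together with one of the $(n/b)^t$ selections of a vertex per block yields a rainbow $H$ in $K_n$, contributing $a(n/b)^t$. These two families are disjoint by their vertex distributions, so summing gives \eqref{eq-blowup}. (For $H = K_t$ no further contributions arise: any configuration in which a block contains two or more vertices of $H$ while another block also contains some vertex produces two parallel edges of the same color, which is impossible in a rainbow copy. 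For general $H$ additional rainbow copies may exist, but we only need a lower bound.)

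To solve the recurrence, I would set $n = b^k$ and $G(k) \coloneqq F(b^k)$, with base case $G(0) = F(1) = 0$; the recurrence becomes $G(k) \geq bG(k-1) + ab^{t(k-1)}$. Iterating and summing the geometric series yields $G(k) \geq a\sum_{j=0}^{k-1} b^{k-1-j}\,b^{tj} = ab^{k-1} \cdot \frac{b^{k(t-1)}-1}{b^{t-1}-1} = \frac{a}{b^t - b}(b^{tk} - b^k)$, which rewritten in $n$ is $F(n) \geq \frac{a}{b^t - b} n^t - \frac{a}{b^t - b} n$, matching \eqref{eq-solved-recurrence} since the linear term is absorbed into $O(n^{t-1})$ for $t \geq 2$. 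The only subtle point is the case analysis in the first step: one must verify disjointness of the two families and confirm that the color-inheritance between the original $K_b$ and its blowup indeed transfers rainbow-ness correctly. Once this is in hand, the recurrence step is immediate and the iteration is routine.
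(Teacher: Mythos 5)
Your proposal is correct, and the first half coincides with the paper's argument: the same two disjoint families (copies inside a single outer block, giving $bF(n/b)$, and transversal copies obtained by lifting each of the $a$ rainbow copies of $H$ in $K_b$ via one vertex per relevant block, giving $a(n/b)^t$) establish \eqref{eq-blowup}; your remarks on disjointness and on color inheritance are in fact stated more carefully than in the paper, which phrases the transversal count rather loosely. The second half genuinely differs in technique: the paper substitutes $n=b^k$ and then solves the recurrence via the generating function $A(x)=\sum_k f(k)x^k$, partial fractions, and extraction of coefficients, whereas you unroll the recurrence directly and sum a geometric series. Both routes produce the identical exact bound $F(n)\geq \frac{a(n^t-n)}{b^t-b}$, so nothing is lost; your iteration is more elementary and avoids the (mild) awkwardness of manipulating inequalities termwise inside a formal power series, while the paper's generating-function computation is a self-contained template that generalizes mechanically to other linear recurrences of this shape. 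One small point to keep in mind either way: the base case $F(1)=0$ (equivalently $f(0)=0$) is what both arguments use to start, and it holds here because $H$ has at least two vertices.
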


\begin{proof}
First, we derive the inequality (\ref{eq-blowup}).
Each vertex of $K_b$ that we blow up contains $F(n/b)$ rainbow copies of $H$,
contributing the $bF(n/b)$ term. Additionally, one vertex from each of the $b$
parts can be chosen in $(n/b)^t$ ways, and each choice contains $a$ rainbow copies
of $H$. Therefore, $F(n) \geq bF(n/b) + a(n/b)^t$.
For an example of this bound, see Figure \ref{fig:iterated-blowup}.
We solve this recurrence using generating functions. First, we transform the recurrence into something that is easier to solve by making the substitution $n=b^k$, where $k$ is the number of iterations in the blow-up. We define the function $f(k)=F(b^k)$ to simplify (\ref{eq-blowup}) and get the following recurrence: 

$$f(k) \geq bf\left(k-1\right) + a\left(b^{k-1}\right)^t.$$

Let $A(x)=\sum_{k=0}^{\infty} f(k)x^k$ be the generating function. Since $f(0)=0$, we have:

\begin{align*}
    A(x)&=\sum_{k=1}^{\infty} f(k)x^k \\
       &\geq \sum_{k=1}^{\infty} \left[bf(k-1)+a\left(b^{k-1}\right)^t\right]x^k \\
       &= b\sum_{k=1}^\infty f(k-1)x^k + a\sum_{k=1}^{\infty} (b^{k-1})^tx^k \\
       &= bx\sum_{k=1}^{\infty} f(k-1)x^{k-1} + \frac{a}{b^t}\sum_{k=1}^{\infty} (b^t x)^k \\
       &= bxA(x) + \frac{a}{b^t}\left[\sum_{k=0}^{\infty} (b^tx)^k -1\right] \\
       &= bxA(x) + \frac{a}{b^t}\left[\frac{1}{1-b^tx}-1\right] \\
       &= bxA(x) + \frac{ax}{1-b^tx}.
\end{align*}

Note that the geometric series $\sum (b^t x)^k$ converges for $|x| < b^{-t}$, which also ensures
$1 - bx > 0$. Therefore, isolating $A(x)$ we get
\begin{equation*}
A(x) \geq \frac{ax}{(1-bx)(1-b^tx)}.
\end{equation*}
Next, using partial fraction decomposition, we can rewrite $A(x)$ as a power series and obtain the closed form of the recurrence. For real numbers $B$ and $C$, if we set 
$$\frac{ax}{(1-bx)(1-b^t x)} = \frac{B}{1-bx} + \frac{C}{1-b^t x},$$ 
we obtain $B=\frac{a}{b-b^t}$ and $C=-\frac{a}{b-b^t}$. Therefore:

\begin{align*}
A(x)& \geq \frac{a}{b-b^t}\cdot \frac{1}{1-bx} - \frac{a}{b-b^t}\cdot \frac{1}{1-b^tx} \\ 
&= \frac{a}{b-b^t}\sum_{k=0}^{\infty} (bx)^k - \frac{a}{b-b^t}\sum_{k=0}^{\infty} (b^tx)^k \\ 
&= \sum_{k=0}^{\infty} \left[\frac{a\left(b^{tk}-b^k\right)}{b^t-b}\right]x^k. 
\end{align*} \\

Therefore, $f(k) \geq \frac{a\left(b^{tk}-b^k\right)}{b^t-b}$. Substituting $n = b^k$, we get that
\begin{equation*}
F(n) \geq \frac{a(n^t - n)}{b^t - b} = \frac{a}{b^t - b}n^t + O(n^{t-1}),
\end{equation*}
as desired.
\end{proof}

Combining (\ref{eq-random}) with (\ref{eq-solved-recurrence}), it suffices
to find a coloring of some $K_b$ with $a$ rainbow copies of $H$, where
\begin{equation}\label{eq-y-criterion}
a > \frac{b(b^{t-1} - 1){r\choose{e}}e!}{|\text{Aut}(H)|r^e}.
\end{equation}

Before stating and proving the main result, we give an
explicit example that serves as a foundation and motivation 
for some of the 
techniques used in the proof.
Note that the following was proved for $r = 3$ by Erd\H{o}s and Hajnal in 1972 \cite{Erdos-Hajnal}.

\begin{proposition}\label{prop-K3}
$K_3$ is $r$-rainbow-uncommon for all $r \geq 3$.
\end{proposition}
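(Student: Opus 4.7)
The plan is to apply the criterion (\ref{eq-y-criterion}), which for $H = K_3$ (so $t = e = 3$ and $|\text{Aut}(K_3)| = 6$) reduces to exhibiting a $b$ and an $r$-coloring of $E(K_b)$ with
\[a > \frac{b(b^2-1)\binom{r}{3}}{r^3}\]
rainbow triangles. I will take $b = r$ and produce such a coloring in which \emph{every} triangle is rainbow, giving $a = \binom{r}{3}$.

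The key observation is that in any proper edge-coloring of a graph, every triangle is automatically rainbow, since any two edges of a triangle share a vertex and therefore receive different colors. Recall that the chromatic index satisfies $\chi'(K_r) = r$ when $r$ is odd and $\chi'(K_r) = r - 1$ when $r$ is even. In the odd case I simply take a proper $r$-edge-coloring of $K_r$; it is automatically a surjective $r$-coloring and every triangle is rainbow. In the even case I start from a proper $(r-1)$-edge-coloring of $K_r$ and recolor a single edge with a new color $r$; since this new color appears on only one edge, it cannot coincide with any other color in the unique triangle-pair modified, so every triangle remains rainbow.

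With $a = \binom{r}{3}$ and $b = r$, the required inequality simplifies to $1 > (r^2 - 1)/r^2$, which is trivially true. Applying Lemma \ref{lem-main} and iterating the blow-up then yields, for each $n = r^k$, an $r$-coloring of $K_n$ with asymptotically more rainbow triangles than the uniformly random expectation computed in (\ref{eq-random}), establishing $r$-rainbow-uncommonness of $K_3$.

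The only subtlety is the even case, namely checking that the single recolored edge does not destroy the rainbow property of any triangle; this is immediate because the new color is used nowhere else in the graph. Notably, the fact that proper edge-colorings automatically force every triangle to be rainbow is a feature special to $H = K_3$, since for larger complete graphs $K_t$ a copy contains pairs of vertex-disjoint edges that may legally share a color under a proper edge-coloring. Replacing this simple observation with a workable substitute is precisely the obstacle that the proof of Theorem \ref{thm-weak} will need to address.
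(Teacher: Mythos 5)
Your proof is correct and takes essentially the same approach as the paper: the paper's ``parallel $r$-coloring'' is precisely an $r$-coloring of $E(K_r)$ whose color classes are matchings, so every triangle is rainbow, and both arguments then plug $a=\binom{r}{3}$, $b=r$ into Lemma \ref{lem-main} and the criterion (\ref{eq-y-criterion}). The only difference is cosmetic: you obtain such a coloring from the chromatic index of $K_r$ (patching the even case by recoloring one edge with a fresh color), while the paper constructs the matching decomposition explicitly.
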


\begin{proof}
We will
define a coloring of $E(K_r)$ in which every triangle is rainbow
and then use the iterated blowup method to determine a lower bound of the maximum
number of rainbow triangles the $K_n$ can contain.
For such coloring of $E(K_r)$, each color class must form a matching.
We call it a \textbf{parallel $r$-coloring}.
Examples with $r = 7$ and $r = 8$ are provided 
in Figure \ref{fig-parallel-coloring}.
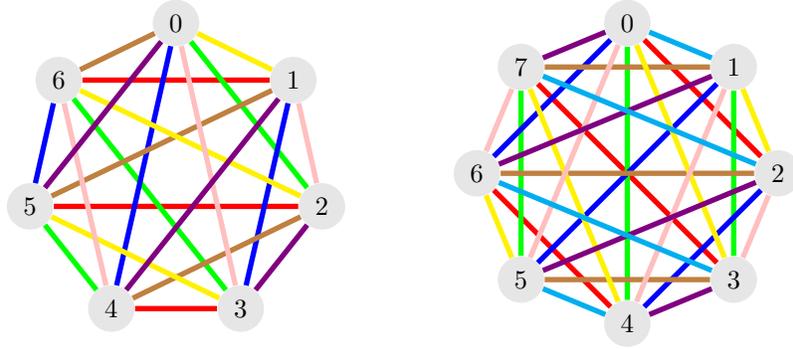
\begin{figure}
    \centering
    \begin{tikzpicture}[scale=2]
    \node[circle,fill=gray!20] at (0,1) (a) {0};
    \node[circle,fill=gray!20] at (0.78,0.62) (b) {1};
    \node[circle,fill=gray!20] at (0.97,-0.22) (c) {2};
    \node[circle,fill=gray!20] at (0.43,-0.9) (d) {3};
    \node[circle,fill=gray!20] at (-0.43,-0.9) (e) {4};
    \node[circle,fill=gray!20] at (-0.97,-0.22) (f) {5};
    \node[circle,fill=gray!20] at (-0.78,0.62) (g) {6};

    \draw[red,line width = 2pt] (g)--(b) (c)--(f) (d)--(e);
    \draw[green,line width = 2pt] (a)--(c) (d)--(g) (e)--(f);
    \draw[blue,line width = 2pt] (b)--(d) (e)--(a) (f)--(g);
    \draw[brown,line width = 2pt] (c)--(e) (f)--(b) (g)--(a);
    \draw[yellow,line width = 2pt] (d)--(f) (g)--(c) (a)--(b);
    \draw[pink,line width = 2pt] (e)--(g) (a)--(d) (b)--(c);
    \draw[violet,line width = 2pt] (f)--(a) (b)--(e) (c)--(d);

    \foreach \t in {0,1,...,7}{
        \node[circle,fill=gray!20] at ($(3,0) + (90 - 45*\t:1)$) (\t) {\t};
    }
    \draw[red, line width = 2pt] (2)--(0) (3)--(7) (4)--(6);
    \draw[green, line width = 2pt] (3) -- (1) (4) -- (0) (5) -- (7);
    \draw[blue, line width = 2pt] (4)--(2) (5)--(1) (6)--(0);
    \draw[brown, line width = 2pt] (5)--(3) (6)--(2) (7)--(1);

    \draw[yellow, line width = 2pt] (6)--(5) (7)--(4) (0)--(3) (1) -- (2);
    \draw[pink, line width = 2pt] (7) -- (6) (0) -- (5) (1) -- (4) (2) -- (3);
    \draw[violet, line width = 2pt] (0)--(7) (1)--(6) (2)--(5) (3) -- (4);
    \draw[cyan, line width = 2pt] (1)--(0) (2)--(7) (3)--(6) (4) -- (5);
    \end{tikzpicture}
    \caption{Parallel $r$-coloring defined in the proof of Proposition \ref{prop-K3}.
    In particular, there can be no non-rainbow triangles.}
    \label{fig-parallel-coloring}
\end{figure}

Such coloring can be constructed for all $r\geq 3$, and it is convenient to
describe the parallel $r$-coloring for $r$ odd
and even separately. Let $c$ denote the $r$-coloring of $E(K_r)$ 
with vertices $\{0, 1, \ldots, r-1\}$. 
\begin{itemize}
    \item when $r$ is odd, for each $k=1,\ldots, 
    r$ color $c(\{k+i \mod{r}, k-i \mod{r}\})=k$ for all $i=1,\ldots, \frac{r-1}{2}$,
    \item when $r$ is even, for each $k=1,\ldots, \frac{r}{2}$ 
    color $c(\{k+i \mod{r}, k-i \mod{r}\})=k$ for all $i=1,\ldots, \frac{r}{2}-1$ and for each $k=\frac{r}{2}+1,\ldots, r$ color $c(\{k+i \mod{r}, k-i+1 \mod{r}\})=k$ for all $i=1,\ldots, \frac{r}{2}$.
\end{itemize} 
It follows that no vertex has two incident edges of the same color, and hence every
$K_3$ must be rainbow.
By the proof of Lemma~\ref{lem-main}, the number of rainbow copies of $K_3$ in a $K_n$ generated by the iterated
blow-up of $K_r$ with coloring $c$ is at least $\frac{r-2}{6(r+1)}(n^3 - n)$, while the expected number of rainbow copies in a uniformly random coloring is $\frac{(r-1)(r-2)}{6r^2}n(n-1)(n-2)$. It is straightforward
to check that this construction satisfies the inequality (\ref{eq-y-criterion}).  
\end{proof}

The next two results give us an upper bound the number of non-rainbow $K_t$s in $K_r$ under a parallel $r$-coloring.

\begin{lemma}\label{num_K4s} 
Let $r\geq 6$, and let $c:E(K_r)\rightarrow \{1,\dots, r\}$ be a parallel $r$-coloring. Then there are at most~$r\binom{\lfloor r/2\rfloor}{2}$ non-rainbow copies of $K_4$ in $K_{r}$ under $c$. \end{lemma}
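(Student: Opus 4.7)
My plan is to convert the count of non-rainbow $K_4$'s into a count of pairs of disjoint same-colored edges, and then bound the latter using the matching structure of each color class.

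First, I would recall from the defining property of a parallel $r$-coloring (as established in the proof of Proposition~\ref{prop-K3}) that no vertex is incident to two edges of the same color. Equivalently, each color class $c^{-1}(k)$ is a matching in $K_r$, and so $|c^{-1}(k)|\leq \lfloor r/2\rfloor$ for every color $k$.

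Next, I would analyze an arbitrary non-rainbow $K_4$. Among its six edges some pair must share a color $k$; because $c^{-1}(k)$ is a matching, those two edges are vertex-disjoint, and any two vertex-disjoint edges in a $K_4$ already span all four of its vertices. Thus the four vertices of the non-rainbow $K_4$ are precisely the endpoints of some pair of disjoint edges lying in a single color class. This observation yields a surjection from
$$T = \{(k,\{e_1,e_2\}) : e_1,e_2 \in c^{-1}(k),\ e_1\cap e_2=\emptyset\}$$
onto the set of non-rainbow $K_4$'s, defined by sending $(k,\{e_1,e_2\})$ to the $K_4$ spanned by the four endpoints of $e_1\cup e_2$ (this $K_4$ really is non-rainbow because it contains two $k$-colored edges).

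Since any two distinct edges of a matching are automatically disjoint, the number of pairs in the matching $c^{-1}(k)$ is $\binom{|c^{-1}(k)|}{2}$, and I can conclude
$$|T| = \sum_{k=1}^r \binom{|c^{-1}(k)|}{2} \leq r\binom{\lfloor r/2\rfloor}{2},$$
which by the surjection above bounds the number of non-rainbow $K_4$'s as desired. The only step requiring any care is verifying the surjection, which reduces to the observation that a pair of disjoint edges uniquely determines its set of four endpoints and hence a unique $K_4$; beyond this, the argument is purely bookkeeping, so I do not anticipate a serious obstacle.
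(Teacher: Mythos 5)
Your proof is correct and follows essentially the same approach as the paper: it charges each non-rainbow $K_4$ to a pair of edges in a single color class (a matching of size at most $\lfloor r/2\rfloor$), giving at most $\binom{\lfloor r/2\rfloor}{2}$ such $K_4$'s per color and $r\binom{\lfloor r/2\rfloor}{2}$ overall. Your explicit surjection just spells out the counting step the paper leaves implicit.
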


\begin{proof}
    Given $r\geq 6$, let $c$ be a parallel $r$-coloring of $E(K_r)$. Notice that under this coloring, every non-rainbow $K_4$ in $K_r$ uses exactly two edges from some color class. Therefore, for each color, there are at most $\binom{\lfloor r/2\rfloor}{2}$ non-rainbow copies of $K_4$ that have that color repeated. Thus, since there are $r$ colors in total, the number of non-rainbow $K_4$s in $K_r$ is at most $r\binom{\lfloor r/2\rfloor}{2}$. 
\end{proof}

\begin{lemma}\label{num_Kts} Let $t\geq 4$, and let $r\geq {{t}\choose{2}}$ and $c:E(K_r)\rightarrow \{1,\dots,r\}$ be a parallel $r$-coloring. Then there are at most
$r\binom{\lfloor r/2\rfloor}{2}\binom{r-4}{t-4}$ non-rainbow copies of $K_{t}$ in $K_{r}$ under $c$. \end{lemma}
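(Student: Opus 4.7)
The plan is to adapt the counting strategy used in the proof of Lemma \ref{num_K4s}, extending from $K_4$ to $K_t$ by choosing the extra $t-4$ vertices from the remaining ones.

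First I would observe that since $c$ is a parallel $r$-coloring, each color class is a matching, so any two edges of the same color are vertex-disjoint and together span exactly $4$ vertices. Therefore, any non-rainbow copy of $K_t$ in $K_r$ must contain some pair of same-colored edges whose endpoints form a $4$-vertex subset of the $t$ vertices.

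The main step is a union-bound / double-counting argument. I would enumerate pairs $(P, K)$, where $P$ is an unordered pair of two same-colored edges (equivalently, a pair of edges from one color class) and $K$ is a non-rainbow copy of $K_t$ that contains both edges of $P$. Since every non-rainbow $K_t$ contains at least one such monochromatic pair, the total number of non-rainbow $K_t$s is at most the number of such pairs $(P, K)$. For each fixed $P$, the four endpoints of $P$ are determined, and to extend $P$ to a copy of $K_t$ we just need to choose the remaining $t-4$ vertices from the other $r-4$ vertices of $K_r$; this contributes a factor of $\binom{r-4}{t-4}$, which is well-defined because the hypothesis $r\geq \binom{t}{2}$ (certainly $r\geq t$) ensures $r-4\geq t-4$. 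The number of choices of $P$ is exactly the quantity computed in Lemma \ref{num_K4s}: each color class is a matching of at most $\lfloor r/2\rfloor$ edges, contributing at most $\binom{\lfloor r/2\rfloor}{2}$ pairs per color, for a total of at most $r\binom{\lfloor r/2\rfloor}{2}$ monochromatic pairs.

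Multiplying these two estimates would yield the claimed bound of $r\binom{\lfloor r/2\rfloor}{2}\binom{r-4}{t-4}$. I would note that this argument is an overcount (a non-rainbow $K_t$ with several monochromatic pairs is counted multiple times), but that is harmless for an upper bound. There is no real obstacle here; the argument is essentially a one-step refinement of Lemma \ref{num_K4s}, and the only thing to verify carefully is that the hypothesis $r \geq \binom{t}{2}$ is sufficient to make the binomial coefficient $\binom{r-4}{t-4}$ meaningful, which it is.
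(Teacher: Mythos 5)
Your proposal is correct and follows essentially the same route as the paper: every non-rainbow $K_t$ contains a monochromatic pair of edges (equivalently a non-rainbow $K_4$), these are bounded by $r\binom{\lfloor r/2\rfloor}{2}$ as in Lemma~\ref{num_K4s}, and each extends to at most $\binom{r-4}{t-4}$ copies of $K_t$. The only cosmetic difference is that you re-derive the pair count and phrase the argument as a union bound over pairs $(P,K)$ rather than citing Lemma~\ref{num_K4s} for non-rainbow $K_4$s directly, which changes nothing of substance.
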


\begin{proof}
Given $t\geq 4$ and $r\geq {{t}\choose{2}}$, let $c$ be a parallel $r$-coloring of $E(K_r)$. Notice that every non-rainbow $K_t$ in $K_r$ contains a non-rainbow $K_4$. By Lemma~\ref{num_K4s}, there are at most $r\binom{\lfloor r/2\rfloor}{2}$ non-rainbow $K_4$s in $K_r$ under the parallel $r$-coloring, and each non-rainbow $K_4$ is contained in at most $\binom{r-4}{t-4}$ $K_t$s because we may select the remaining $t-4$ vertices of $K_t$ from $K_r$ in $\binom{r-4}{t-4}$ ways. Therefore, the number of non-rainbow copies of $K_t$ in $K_r$ is at most $r\binom{\lfloor r/2\rfloor}{2}\binom{r-4}{t-4}$ under $c$.
\end{proof}

We are now ready to prove our main result, restated (and adjusted slightly) below.

\begin{theorem}
Let $t\geq 4$. There exists $r_t \geq {t\choose{2}}$ such that for all $r\geq r_t$, $K_t$ is $r$-rainbow-uncommon.     
\end{theorem}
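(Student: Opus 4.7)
The plan is to apply Lemma \ref{lem-main} to the parallel $r$-coloring of $E(K_r)$ introduced in Proposition \ref{prop-K3}, i.e., set $b = r$, and then verify the criterion (\ref{eq-y-criterion}) with $H = K_t$, $e = \binom{t}{2}$, and $|\text{Aut}(K_t)| = t!$. Since $K_r$ has $\binom{r}{t}$ total copies of $K_t$ and Lemma \ref{num_Kts} caps the non-rainbow ones, the number of rainbow $K_t$'s in our coloring is at least
$$a \;\geq\; \binom{r}{t} - r\binom{\lfloor r/2\rfloor}{2}\binom{r-4}{t-4}.$$

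With this lower bound on $a$, the task reduces to showing the polynomial inequality
$$\binom{r}{t} - r\binom{\lfloor r/2\rfloor}{2}\binom{r-4}{t-4} \;>\; \frac{r(r^{t-1}-1)\binom{r}{e}e!}{t!\, r^e}$$
for $r$ large enough in terms of $t$. Both sides are polynomials in $r$ of degree $t$ (the floor contributes only $O(r^{t-2})$), each with leading coefficient $1/t!$, so the heart of the matter is a coefficient comparison at order $r^{t-1}$. Expanding $\binom{r}{e}e!/r^e = \prod_{i=0}^{e-1}(1-i/r) = 1 - \binom{e}{2}/r + O(r^{-2})$ on the right, combining with the standard expansion of $\binom{r}{t}$ and the $r^{t-1}/(8(t-4)!)$ leading asymptotics of the subtracted correction on the left, and substituting $e = t(t-1)/2$, a short algebraic simplification reveals that the $r^{t-1}$-coefficient of LHS $-$ RHS equals $t(t-1)(t-3)/(2\cdot t!)$, which is strictly positive precisely when $t \geq 4$.

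Hence LHS $-$ RHS $\sim C_t\, r^{t-1}$ with $C_t > 0$, so the strict inequality (\ref{eq-y-criterion}) holds for all $r$ above some threshold $r_t \geq \binom{t}{2}$. Feeding this into Lemma \ref{lem-main} and comparing against the random benchmark (\ref{eq-random}) yields an iterated blow-up coloring with asymptotically more rainbow $K_t$'s than a uniformly random $r$-coloring, establishing $r$-rainbow-uncommonness. The main delicate step is the algebraic identification of the factor $t(t-1)(t-3)$ in the leading nontrivial coefficient: this is exactly the quantity that vanishes at $t = 3$, which is why $K_3$ requires the specialized construction of Proposition \ref{prop-K3} rather than the generic Lemma \ref{num_Kts} approach. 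Theorem \ref{thm-exact} for $t = 4,5$ should then follow by plugging $r = \binom{t}{2}$ into the same inequality and verifying numerically that the $O(r^{t-2})$ error terms do not overwhelm the positive main term at those specific small values of $r$.
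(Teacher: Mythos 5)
Your proposal is correct and follows essentially the same route as the paper: parallel $r$-coloring with $b=r$, the bound $a\geq\binom{r}{t}-r\binom{\lfloor r/2\rfloor}{2}\binom{r-4}{t-4}$ from Lemma~\ref{num_Kts}, and verification of (\ref{eq-y-criterion}) by observing that the top-degree terms cancel and the next coefficient is positive for $t\geq 4$. Your coefficient $t(t-1)(t-3)/(2\cdot t!)$ agrees with the paper's $t(t-1)(t-3)/2$ in (\ref{ineq_final}) up to the normalization by $t!$ (the paper first clears denominators to get the polynomial inequality (\ref{ineq_semifinal})), and your handling of the $t=4,5$ cases matches the paper's derivation of Theorem~\ref{thm-exact}.
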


\begin{proof}
Let $t\geq 4$ and $r\geq {{t}\choose{2}}$, and let $c$ be a parallel $r$-coloring of $E(K_r)$. By Lemma~\ref{num_Kts}, there are at most $r\binom{\lfloor r/2\rfloor}{2}\binom{r-4}{t-4}$ non-rainbow copies of $K_t$ in $K_r$ under $c$. Therefore, there are at least $$\binom{r}{t}-r\binom{\lfloor r/2\rfloor}{2}\binom{r-4}{t-4}\geq 
\binom{r}{t}\left(1-\frac{r \cdot t!}{8(t-4)!(r-1)(r-3)}\right)$$ rainbow copies of $K_t$ in $K_r$. In the inequality above, note that $\binom{\lfloor r/2\rfloor}{2}\leq \frac{r(r-2)}{8}$.

We will use the iterated blow-up method described at the beginning of this section with parameters $H=K_t$, $b=r$, and $a=\binom{r}{t}\left(1-\frac{r \cdot t!}{8(t-4)!(r-1)(r-3)}\right)$. Note that this implies $|Aut(H)|=t!$ and $e=\binom{t}{2}$. Therefore, it suffices to show that the inequality~(\ref{eq-y-criterion}) is satisfied, i.e. we will show that \begin{equation}\label{ineq_complicated}
    \binom{r}{t}\left(1-\frac{r \cdot t!}{8(t-4)!(r-1)(r-3)}\right) > \frac{r(r^{t-1} - 1){r\choose{\binom{t}{2}}}\binom{t}{2}!}{t!r^{\binom{t}{2}}}.\end{equation}

Rearranging the terms in the inequality~(\ref{ineq_complicated}), one can show that it is equivalent to 

$$\left(1-\frac{r\cdot t!}{8(t-4)!(r-1)(r-3)}\right)r^{t(t-1)/2}>\frac{(r-t)!}{(r-\binom{t}{2})!}(r^t-r).$$

Notice that the term $\frac{(r-t)!}{(r-\binom{t}{2})!}$ on the right hand side of the inequality above can be written as $$(r-t)(r-t-1)\cdots\left(r-\binom{t}{2}+1\right) =\prod_{l=t}^{\binom{t}{2}-1}\left(r-l\right), $$ therefore, the inequality~(\ref{ineq_complicated}) holds if and only if the following inequality holds:

\begin{equation}\label{ineq_semifinal}    
\left((r-1)(r-3)-\frac{r\cdot t!}{8(t-4)!}\right)r^{t(t-1)/2}-(r-1)(r-3) \left(\prod_{l=t}^{\binom{t}{2}-1}\left(r-l\right)\right)(r^t-r)>0.\end{equation}

Consider the leading coefficient of the polynomial on the left hand side of (\ref{ineq_semifinal}). Notice that the term with the largest power of $r$ that appears in (\ref{ineq_semifinal}) is $r^{2 + t(t-1)/2}$, however its coefficient is $0$. Therefore, we will focus on the coefficient of the term $r^{1 + t(t-1)/2}$, which is 

\begin{equation}\label{ineq_final}
\left(-4-\frac{t!}{8(t-4)!}\right) - \left(-4 - \left(\sum_{l = t}^{\binom{t}{2}-1}l\right)\right) = t(t-1)(t-3)/2.\end{equation}

Since $t\geq 4$, the leading coefficient calculated in (\ref{ineq_final}) is strictly positive. This implies that there exists a large enough integer $r_t$ that satisfies the inequality (\ref{ineq_semifinal}), in fact, the inequality (\ref{ineq_semifinal}) will hold for all $r\geq r_t$. Therefore, $K_t$ is $r$-rainbow uncommon for all $r\geq r_t$.
\end{proof}

Theorem \ref{thm-weak} follows directly from the theorem above. Theorem \ref{thm-exact} follows from Proposition \ref{prop-K3} and by substituting $t = 4, r= 6$ and $t = 5, r = 10$, into 
(\ref{ineq_semifinal}) and seeing that both cases satisfy the inequality.

\section{Conclusion}

Sun very recently showed that any graph containing a cycle is $r$-rainbow-uncommon for all $r \geq e$ \cite{Sun}.
While that implies our result, we explicitly construct
the colorings for complete graphs that show rainbow-uncommonness.

Future work is to investigate upper bounds, particularly to show which graphs are $r$-rainbow-common. Thus far, the only class of graphs known
to be rainbow-common are disjoint unions of stars \cite{DeSilva}.
It is also likely that $P_4$, a path on four vertices, is $3$-rainbow-common.
We believe the converse of Sun's result is also true,
formulated in the following conjecture.
\begin{conjecture}
$H$ is rainbow-common if and only if $H$ is a forest.
\end{conjecture}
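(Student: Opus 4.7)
The plan is to handle the two directions of the biconditional separately. The ``only if'' direction is immediate from Sun's theorem cited above: any $H$ containing a cycle is $r$-rainbow-uncommon for all $r \geq e(H)$, so such an $H$ cannot be rainbow-common at any admissible $r$. The substance of the conjecture is therefore the forest direction: for every forest $F$ with $t$ vertices and $e$ edges and every $r \geq e$, one needs
\[
M_{\rb}(F;n,r) \leq \binom{n}{t} t! \cdot \frac{\binom{r}{e}e!}{|\text{Aut}(F)|\, r^e} + o(n^t).
\]

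I would attempt the forest direction by induction on $e$, exploiting the fact that any nontrivial forest admits a leaf. The base cases $e\in\{0,1\}$ are trivial (every copy is automatically rainbow, and the random expectation is $1$). For the inductive step, fix a leaf $v$ of $F$ with neighbor $u$, set $F' = F - v$, and given any $r$-coloring $c$ of $E(K_n)$ decompose
\[
|\text{Aut}(F)| \cdot N_{\rb}(F,c) = \sum_{x \in V(K_n)} \sum_{\substack{S \subseteq [r]\\ |S|=e-1}} N(x,S) \sum_{i \notin S} d_i(x) + O(n^{t-1}),
\]
where $N(x,S)$ counts rainbow $F'$-injections sending $u$ to $x$ and using exactly color set $S$, and $d_i(x)$ is the color-$i$ degree of $x$. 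The target is then to show this weighted double sum is at most the random expectation up to $o(n^t)$.

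The crucial step is to bound the double sum. I would strengthen the inductive hypothesis to a joint estimate on $N(x,S)$ --- a Sidorenko-style inequality for rainbow forest counts --- whose averaged form recovers the original common-ness statement for $F'$. Combined with the identity $\sum_i d_i(x) = n-1$, one then applies a Jensen/convexity argument to the color-degree profile $(d_1(x),\ldots,d_r(x))$ at each $x$: the weighted sum $\sum_{|S|=e-1}\sum_{i\notin S}(\cdot)$ is a symmetric linear functional of the $d_i(x)$, so its rainbow-weighted extremizer should be the uniform profile $d_i(x) \equiv (n-1)/r$, matching the random coloring.

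The main obstacle will be designing the strengthened inductive hypothesis so that it is simultaneously strong enough to control the weighted sum $\sum N(x,S)\sum_{i\notin S} d_i(x)$ and stable under the leaf-removal operation. The difficulty is genuine: the color set $S$ realized by a rainbow $F'$-embedding through $x$ is not independent of the local color-degree profile at $x$ under an adversarial coloring, and a naive ``marginal'' hypothesis collapses. If direct induction proves too brittle, a fallback would be a flag-algebraic certificate expressing the surplus (random expectation minus actual rainbow density) as a nonnegative combination of squared subgraph densities. The fact that the only class currently known to be rainbow-common is disjoint unions of stars --- where leaf removal trivially factors multiplicatively --- signals that handling trees with long paths, where the leaf-removal step couples nontrivially with the constraint $\sum_i d_i = n-1$, will require genuinely new ideas. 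Finally, the boundary case $r = e$, where the random density $e!/e^e$ offers the least slack, may need separate care, possibly via explicit verification on small forests as a base for a second induction.
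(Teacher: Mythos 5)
The statement you are addressing is presented in the paper as a \emph{conjecture}, not a theorem: the paper offers no proof of it, and indeed explicitly records that the only class of graphs currently known to be rainbow-common is disjoint unions of stars, with even the $3$-rainbow-commonness of $P_4$ left as merely ``likely.'' Your ``only if'' direction is fine: Sun's theorem gives $r$-rainbow-uncommonness for every graph containing a cycle and every $r \geq e$, so no such graph can be rainbow-common. But the ``if'' direction --- that every forest is rainbow-common --- is exactly the open content of the conjecture, and your proposal does not establish it. The key inequality bounding $\sum_x \sum_{|S|=e-1} N(x,S)\sum_{i\notin S} d_i(x)$ is never proved; you invoke a ``strengthened inductive hypothesis'' and a ``Sidorenko-style inequality'' without formulating either, and you yourself flag that the naive marginal hypothesis collapses and that ``genuinely new ideas'' are required. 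That is an accurate description of why the problem is open, not a proof.

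Beyond the missing lemma, the specific mechanism you propose is suspect. For fixed counts $N(x,S)$, the quantity $\sum_{|S|=e-1} N(x,S)\sum_{i\notin S} d_i(x)$ is \emph{linear} in the color-degree vector $(d_1(x),\dots,d_r(x))$, and a linear functional on the simplex $\sum_i d_i(x) = n-1$ is maximized at an extreme point, not at the uniform profile; Jensen or convexity gives you nothing here unless you first establish a genuinely nonlinear anticorrelation between $N(x,S)$ and the color degrees at $x$, which is precisely the coupling you admit you cannot control. Note also that commonness is structurally harder than the uncommonness results actually proved in this paper: a single explicit coloring certifies uncommonness, whereas commonness requires an upper bound valid for \emph{all} colorings of $E(K_n)$. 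Until the forest direction is carried out --- even in the smallest nontrivial case $P_4$ with $r=3$ --- the statement remains a conjecture, and your writeup should be read as a research plan rather than a proof.
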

By \textit{rainbow-common}, we mean $r$-rainbow-common for all 
$r \geq e$.

\section*{Acknowledgements}

This work was done as part of the 2021 Polymath Jr program, partially supported by NSF award DMS–2113535. We are grateful to the organizers of the Polymath Jr. program for creating the environment to carry out this research project. We also thank our colleagues Pablo Blanco and Sarvagya Jain who contributed to helpful conversations and questions that were valuable for the present paper. The work of the second author was partially supported by the Fairfield Summer Research award.

\bibliographystyle{amsplain}
\bibliography{references}

\end{document}